\theoremstyle{plain}
\newtheorem{theorem}{Theorem}[section]
\newtheorem{lemma}[theorem]{Lemma}
\newtheorem*{maintheorem}{Theorem \ref{thm:main theorem}}
\theoremstyle{definition}
\newtheorem{definition}[theorem]{Definition}
\newtheorem*{notation}{Notation}
\theoremstyle{remark}
\newtheorem{remark}{Remark}
\numberwithin{equation}{section}
\def\XXint#1#2#3{{\setbox0=\hbox{$#1{#2#3}{\int}$}
    \vcenter{\hbox{$#2#3$}}\kern-.5\wd0}}
\def\@citestyle{\m@th\upshape\mdseries}
\def\citeform#1{{\bfseries#1}}
\def\@cite#1#2{{%
  \@citestyle[\citeform{#1}\if@tempswa, #2\fi]}}
\let\csname cite \endcsname\cite
  \edef\cite{\@nx\protect\@xp\@nx\csname cite \endcsname}%
\renewcommand{\leq}{\leqslant}
\renewcommand{\geq}{\geqslant}
\newcommand{\inner}[2]{\langle #1\,,#2\rangle}
\renewcommand{\Im}{\mathop{\mathrm{Im}}}
\renewcommand{\Re}{\mathop{\mathrm{Re}}}
\renewcommand{\span}{\mathop{\mathrm{span}}}
\newcommand{\C}{\mathbb{C}}
\newcommand{\R}{\mathbb{R}}
\newcommand{\B}{\mathbb{B}}
\renewcommand{\H}{\mathbb{H}}
\newcommand{\Lbb}{\mathbb{L}}
\newcommand{\HH}{\mathbf{H}}
\DeclareMathOperator{\I}{I}%
\DeclareMathOperator{\II}{II}%
\DeclareMathOperator{\Isom}{Isom}%
\DeclareMathOperator{\Ric}{Ric}%
\DeclareMathOperator{\vol}{vol}%
\DeclareMathOperator{\sech}{sech}%
\newcommand{\CC}{\mathcal{C}}
\newcommand{\Hcal}{\mathcal{H}}
\newcommand{\Lcal}{\mathcal{L}}
\newcommand{\Mbar}{\overline{M}}
\newcommand{\Mscr}{\mathscr{M}}
\renewcommand{\O}{\mathsf{O}}%
\newcommand{\SO}{\mathsf{SO}}%
\newcommand{\abar}{\bar{a}}
\newcommand{\atilde}{\tilde{a}}
\title[Stability of Helicoids in $\H^3$]
{Stability of Helicoids in Hyperbolic
Three-Dimensional Space}
\begin{document}

\author{Biao Wang}
\date{\today}

\subjclass{Primary 53A10, Secondary 53C42}
\address{Department of Mathematics and Computer Science\\
         Queensborough College, The City University of New York\\
         222-05 56th Avenue Bayside, NY 11364\\}
\email{biwang@qcc.cuny.edu}

\begin{abstract}
  For a family of minimal helicoids $\{\Hcal_a\}_{a\geq{}0}$
  in the hyperbolic $3$-space $\H^3$ (see $\S$\ref{sec:helicoid-I}
  or $\S$\ref{sec:helicoid-II} for detail definitions),
  there exists a constant $a_c\approx{}2.17966$ such that
  the following statements are true:
  \begin{itemize}
     \item $\Hcal_a$ is a globally stable minimal surface if
           $0\leq{}a\leq{}a_c$, and
     \item $\Hcal_a$ is an unstable minimal surface with index one
           if $a>a_c$.
  \end{itemize}
\end{abstract}

\maketitle

\section{Introduction}\label{sec:Introduction}

Let $(\Mbar{}^{3},\overline{g})$ be a $3$-dimensional Riemannian
manifold (compact or complete), and let $\Sigma$ be a
surface (compact or complete) immersed in $\Mbar{}^{3}$. We choose a
local orthonormal frame field $\{e_1,e_2,e_3\}$ for
$\Mbar{}^{3}$ such that, restricted to $\Sigma$, the vectors
$\{e_1,e_2\}$ are tangent to $\Sigma$ and the vector $e_3$
is perpendicular to $\Sigma$. Let $A=(h_{ij})_{2\times{}2}$
denote the second fundamental
form of $\Sigma$, whose entries $h_{ij}$ are represented by
\begin{equation*}
   h_{ij}=\inner{\overline{\nabla}_{e_i}e_{3}}{e_{j}}\ ,
   \quad{}i,j=1,2\ ,
\end{equation*}
where $\overline{\nabla}$ is the covariant derivative in
$\Mbar{}^{3}$, and
$\inner{\cdot}{\cdot}$ is the metric of $\Mbar{}^{3}$.
The immersed surface
$\Sigma$ is called a \emph{minimal surface} if its
\emph{mean curvature} $H=h_{11}+h_{22}$ is identically equal
to zero.

Let $\overline{\Ric}(e_3)$ denote the Ricci curvature of $\Mbar{}^{3}$
in the direction $e_3$, and $|A|^2=\sum_{i,j=1}^{2}h_{ij}^2$.
The \emph{Jacobi operator} on $\Sigma$ is defined by
\begin{equation}\label{eq:Jacobi operator I}
   \Lcal=\Delta_{\Sigma}+(|A|^2+\overline{\Ric}(e_3))\ ,
\end{equation}
where $\Delta_{\Sigma}$ is the Lapalican for the induced
metric on $\Sigma$.

\subsection{Stability of minimal surfaces}
Suppose that $\Sigma$ is a complete minimal surface immersed in
a complete Riemannian manifold $\Mbar{}^{3}$.
For any compact connected subdomain $\Omega$ of $\Sigma$,
its \emph{first eigenvalue} is defined by
\begin{equation}\label{eq:1st eigenvalue of Omega}
   \lambda_{1}(\Omega)=\inf\left\{-\int_{\Omega}u\Lcal{}u\,d\vol
   \ \left|\ u\in{}C_{0}^\infty(\Omega)\ \text{and}\
   \int_{\Omega}u^{2}\,d\vol=1\right.\right\}\ ,
\end{equation}
where $d\vol$ denotes the area element on the surface $\Sigma$.
We say that $\Omega$ is \emph{stable} if $\lambda_{1}(\Omega)>0$,
\emph{unstable} if $\lambda_{1}(\Omega)<0$ and
\emph{maximally weakly stable} if $\lambda_{1}(\Omega)=0$.
For a complete minimal surface $\Sigma$ without boundary, it is
\emph{globally stable} or \emph{stable} if any compact subdomain
of $\Sigma$ is stable.

\begin{remark}It's easy to verify that $\Omega\subset\Sigma$ is
stable if and only if
\begin{equation}
   \int_{\Omega}|\nabla_{\Sigma}u|^{2}d\vol>
   \int_{\Omega}(|A|^2+\overline{\Ric}(e_3))u^{2}d\vol
\end{equation}
for all $u\in{}C_{0}^{\infty}(\Omega)$, where $\nabla_{\Sigma}$ is the
covariant derivative of $\Sigma$.
\end{remark}

The following result is well known (for example see
\cite[Lemma 6.2.5]{Xin03}).

\begin{lemma}\label{lem:monotonicity of eigenvalue}
Let $\Omega_1$ and $\Omega_2$ be connected subdomains of $\Sigma$
with $\Omega_1\subset\Omega_2$, then
\begin{equation*}
  \lambda_{1}(\Omega_1)\geq\lambda_{1}(\Omega_2)\ .
\end{equation*}
If $\Omega_2\setminus\overline{\Omega}_1\ne\emptyset$, then
\begin{equation*}
  \lambda_{1}(\Omega_1)>\lambda_{1}(\Omega_2)\ .
\end{equation*}
\end{lemma}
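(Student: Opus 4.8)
The plan is to read off both inequalities directly from the variational definition \eqref{eq:1st eigenvalue of Omega}: the non-strict one from a trivial inclusion of the competing test-function classes, and the strict one by feeding a first Dirichlet eigenfunction of $\Omega_1$ into the Rayleigh quotient on $\Omega_2$ and then invoking the strong maximum principle.

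For the non-strict inequality I would observe that extension by zero embeds the admissible class for $\Omega_1$ into that for $\Omega_2$: given $u\in C_0^\infty(\Omega_1)$, its support is a compact subset of $\Omega_1\subseteq\Omega_2$, so the function equal to $u$ on $\Omega_1$ and to $0$ on $\Omega_2\setminus\Omega_1$ belongs to $C_0^\infty(\Omega_2)$, and neither $\int u^2\,d\vol$ nor $-\int u\Lcal u\,d\vol$ changes under this extension. Hence the infimum defining $\lambda_1(\Omega_2)$ is taken over a set containing the one defining $\lambda_1(\Omega_1)$, and $\lambda_1(\Omega_2)\le\lambda_1(\Omega_1)$.

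For the strict inequality I would argue by contradiction. Assume $\lambda_1(\Omega_1)=\lambda_1(\Omega_2)=:\lambda$ while $\Omega_2\setminus\overline{\Omega}_1\neq\emptyset$. Since $\lambda_1(\cdot)$ was defined only for compact connected subdomains, $\Omega_1$ is in particular precompact, so the Dirichlet problem for $-\Lcal$ on $\Omega_1$ has a first eigenfunction $\phi_1$, which may be taken strictly positive on $\mathrm{int}\,\Omega_1$ and normalized by $\int\phi_1^2\,d\vol=1$. Its extension $\widetilde\phi_1$ by zero lies in $H_0^1(\Omega_2)$ with the same $L^2$ norm and the same energy $\int\bigl(|\nabla_{\Sigma}\widetilde\phi_1|^2-(|A|^2+\overline{\Ric}(e_3))\widetilde\phi_1^2\bigr)\,d\vol=\lambda_1(\Omega_1)=\lambda$, so $\widetilde\phi_1$ realizes the infimum $\lambda_1(\Omega_2)=\lambda$. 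Being a minimizer of the Rayleigh quotient on $\Omega_2$, it is a weak --- hence, by elliptic regularity, a smooth --- solution of $\Lcal\widetilde\phi_1+\lambda\widetilde\phi_1=0$ on the connected open set $\mathrm{int}\,\Omega_2$. But $\widetilde\phi_1\ge 0$ and vanishes identically on the nonempty open set $\Omega_2\setminus\overline{\Omega}_1$, so the strong maximum principle (equivalently, Harnack's inequality, or unique continuation) forces $\widetilde\phi_1\equiv 0$ on $\Omega_2$, contradicting $\int\widetilde\phi_1^2\,d\vol=1$. Combined with $\lambda_1(\Omega_1)\ge\lambda_1(\Omega_2)$ this yields $\lambda_1(\Omega_1)>\lambda_1(\Omega_2)$.

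The routine inputs --- density of $C_0^\infty$ in $H_0^1$ (so the infimum may be computed over $H_0^1$, where it is attained by Rellich compactness on a bounded domain), the fact that extension by zero maps $H_0^1(\Omega_1)$ isometrically into $H_0^1(\Omega_2)$ for $\Omega_1\subseteq\Omega_2$, and interior elliptic regularity for Rayleigh-quotient minimizers --- I would merely cite. The only step that does genuine work is the strong maximum principle, and it is exactly there that the hypotheses ``$\Omega_2$ connected'' and ``$\Omega_2\setminus\overline{\Omega}_1\neq\emptyset$'' enter; I expect that to be the one point worth spelling out.
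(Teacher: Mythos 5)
The paper gives no proof of this lemma at all --- it is stated as well known and referred to Xin's book (Lemma 6.2.5) --- and your argument is precisely the standard one behind such references: the non-strict inequality from extension by zero in the variational characterization \eqref{eq:1st eigenvalue of Omega}, and strictness by showing that equality would force the zero-extended first eigenfunction of $\Omega_1$ to be a nonnegative smooth Jacobi-type eigenfunction on the connected domain $\Omega_2$ vanishing on the nonempty open set $\Omega_2\setminus\overline{\Omega}_1$, contradicting the strong maximum principle. Your proof is correct as written, and the compactness of the subdomains that you invoke for existence of the eigenfunction is exactly the setting in which the paper uses the lemma.
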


\begin{remark}If $\Omega\subset\Sigma$ is maximally weakly
stable, then for any compact connected subdomains
$\Omega_1,\Omega_2\subset\Sigma$ satisfying
$\Omega_1\subsetneq\Omega\subsetneq\Omega_2$, we have that
$\Omega_1$ is stable whereas $\Omega_2$ is unstable.
\end{remark}

Let
$\Omega_1\subset\Omega_2\subset\cdots\subset\Omega_n\subset\cdots$
be an exhaustion of $\Sigma$, then the first eigenvalue of
$\Sigma$ is defined by
\begin{equation}\label{eq:1st eigenvalue of Sigma}
   \lambda_{1}(\Sigma)=\lim_{n\to\infty}\lambda_{1}(\Omega_n)\ .
\end{equation}
This definition is independent of the choice of the exhaustion.
We say that $\Sigma$ is \emph{stable} if $\lambda_{1}(\Sigma)>0$ and
\emph{unstable} if $\lambda_{1}(\Sigma)<0$.

Let $\Sigma\subset{}\Mbar{}^{3}$ be a complete minimal surface, and let
$\Omega$ be any subdomain of $\Sigma$. Recall that a
\emph{Jacobi field} on $\Omega$ is a $C^\infty$ function
$\phi$ such that $\Lcal\phi = 0$ on $\Omega$.
The following theorem was proved by Fischer-Colbrie and Schoen in
\cite[Theorem 1]{FCS80} (see also \cite[Proposition 1.39]{CM11}).

\begin{theorem}[Fischer-Colbrie and Schoen]\label{thm:FCS80}
If $\Sigma$ is a complete minimal surface in
a $3$-dimensional Riemannian manifold $\Mbar{}^{3}$, then $\Sigma$ is
stable if and only if there exists a positive function
$\phi:\Sigma\to\R$ such that $\Lcal{}\phi=0$.
\end{theorem}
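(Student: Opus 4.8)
My plan is to establish the two implications of this equivalence separately; one direction is a short integration-by-parts computation, the other is an exhaustion-and-compactness argument, and the latter is where the real work lies.

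\emph{A positive Jacobi field implies stability.} Suppose $\phi>0$ satisfies $\Lcal\phi=\Delta_\Sigma\phi+V\phi=0$ on $\Sigma$, where I abbreviate $V=|A|^2+\overline{\Ric}(e_3)$. Given a compact connected subdomain $\Omega\subset\Sigma$ and $u\in C_0^\infty(\Omega)$, I would write $u=\phi w$ with $w:=u/\phi\in C_0^\infty(\Omega)$ (legitimate since $\phi$ is smooth and positive), expand $|\nabla_\Sigma u|^2$, integrate by parts, and use $\Delta_\Sigma\phi=-V\phi$ to get the identity
\[
   -\int_\Omega u\,\Lcal u\,d\vol=\int_\Omega|\nabla_\Sigma u|^2\,d\vol-\int_\Omega V u^2\,d\vol=\int_\Omega\phi^2\,|\nabla_\Sigma w|^2\,d\vol\ \geq\ 0 .
\]
Hence $\lambda_1(\Omega)\geq 0$ for every compact subdomain; choosing a strictly larger compact subdomain $\Omega'\supsetneq\Omega$ and applying Lemma~\ref{lem:monotonicity of eigenvalue} upgrades this to $\lambda_1(\Omega)>\lambda_1(\Omega')\geq 0$, so $\Sigma$ is stable.

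\emph{Stability implies a positive Jacobi field.} I would fix an exhaustion $\Omega_1\subset\subset\Omega_2\subset\subset\cdots$ of $\Sigma$ by smooth, relatively compact, connected domains and a base point $p\in\Omega_1$. Stability gives $\lambda_1(\Omega_n)>0$ for every $n$, so $0$ is not a Dirichlet eigenvalue of $-\Lcal$ on $\Omega_n$ and the Fredholm alternative furnishes a unique smooth solution of $\Lcal u_n=0$ in $\Omega_n$ with $u_n=1$ on $\partial\Omega_n$. The condition $\lambda_1(\Omega_n)>0$ also makes the generalized maximum principle for $\Lcal$ available on $\Omega_n$, so $u_n=1>0$ on $\partial\Omega_n$ forces $u_n\geq 0$, and then $u_n>0$ in $\overline{\Omega}_n$ by the strong maximum principle. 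Normalizing $v_n:=u_n/u_n(p)$, I obtain positive solutions $\Lcal v_n=0$ in $\Omega_n$ with $v_n(p)=1$.

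It remains to pass to the limit. For a fixed relatively compact $K\subset\subset\Sigma$ with $p\in K$ and all large $n$, $v_n$ is a positive solution of $\Delta_\Sigma v+Vv=0$ on a neighborhood of $\overline K$ with fixed smooth coefficients; joining points of $\overline K$ to $p$ by chains of small balls and applying the Harnack inequality yields bounds $c(K)\leq v_n\leq C(K)$ on $K$, uniform in $n$. Interior Schauder estimates then give uniform $C^{2,\alpha}$ bounds on compact subsets, so a diagonal subsequence converges in $C^2_{\mathrm{loc}}(\Sigma)$ to some $\phi\in C^\infty(\Sigma)$ with $\Lcal\phi=0$, $\phi\geq 0$, and $\phi(p)=1$; since $\phi\not\equiv 0$, the Harnack inequality (or the strong maximum principle) gives $\phi>0$ on all of $\Sigma$, the required Jacobi field. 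The step I expect to be the main obstacle is exactly this one: guaranteeing that the limit is a \emph{positive} function on the whole of $\Sigma$ rather than degenerating to $0$. The normalization $v_n(p)=1$ together with the Harnack-chain estimate is what controls this, and it is the reason I would solve genuine boundary value problems on the $\Omega_n$---possible precisely because stability provides $\lambda_1(\Omega_n)>0$---instead of using first Dirichlet eigenfunctions, whose eigenvalues $\lambda_1(\Omega_n)$ need not tend to $0$ and would only yield a positive supersolution.
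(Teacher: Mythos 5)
The paper does not prove this theorem at all: it is imported verbatim from Fischer-Colbrie and Schoen \cite[Theorem 1]{FCS80} (see also \cite[Proposition 1.39]{CM11}), so the only comparison available is with that standard argument, and your proposal reproduces it correctly. The easy direction is right: with $u=\phi w$ the integration by parts gives $-\int_\Omega u\Lcal u\,d\vol=\int_\Omega\phi^2|\nabla_\Sigma w|^2\,d\vol\geq 0$, and your use of the strict monotonicity in Lemma~\ref{lem:monotonicity of eigenvalue} to pass from $\lambda_1(\Omega)\geq 0$ to $\lambda_1(\Omega)>0$ is exactly what is needed to match the paper's definition of stability (strict positivity on every compact subdomain). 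The converse is the classical Fischer-Colbrie--Schoen construction: solve $\Lcal u_n=0$ on an exhaustion with boundary data $1$ (solvable because $\lambda_1(\Omega_n)>0$ rules out a zero Dirichlet eigenvalue), get positivity, normalize at a base point, and use Harnack chains plus interior Schauder estimates to extract a $C^2_{\mathrm{loc}}$ limit which is a positive Jacobi field; your emphasis on the normalization $v_n(p)=1$ as the safeguard against the limit degenerating to $0$ is exactly the right point. The one step you invoke as a black box, the ``generalized maximum principle'' giving $u_n\geq 0$, deserves a line of justification: if $u_n$ were negative somewhere, the open set $\{u_n<0\}$ is compactly contained in $\Omega_n$, and on a component $\Omega'$ of it $u_n$ is a nontrivial solution of $\Lcal u_n=0$ vanishing on $\partial\Omega'$, so $0$ is a Dirichlet eigenvalue of $-\Lcal$ on $\Omega'$ and $\lambda_1(\Omega')\leq 0$, contradicting stability. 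With that observation supplied, the argument is complete and essentially identical to the cited proof.
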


The \emph{Morse index} or \emph{index} of compact connected subdomain
$\Omega$ of $\Sigma$ is the number of negative eigenvalues of the
Jacobi operator $\Lcal$ (counting with multiplicity) acting on the
space of smooth sections of the normal bundle that vanishes on
$\partial\Omega$. The \emph{Morse index} of $\Sigma$ is the supremum of
the Morse indices of compact subdomains of $\Sigma$.


\subsection{Helicoids in $\HH^3$ and the main theorem}\label{sec:helicoid-I}
We consider the Lorentzian $4$-space
$\Lbb^{4}$, i.e. a vector space $\R^{4}$ with the Lorentzian
inner product
\begin{equation}\label{eq:Lorentzian inner product}
   \inner{x}{y}=-x_{1}y_{1}+x_{2}y_{2}+x_{3}y_{3}+x_{4}y_{4}
\end{equation}
where $x,y\in\R^{4}$. The hyperbolic space $\HH^{3}$ can be
considered as the unit sphere of $\Lbb^{4}$:
\begin{equation}
   \HH^{3}=\{x\in\Lbb^{4}\ |\ \inner{x}{x}=-1,\,x_{1}\geq{}1\}\ .
\end{equation}
The helicoid $\Hcal_{a}$ is the surface parametrized by the
$(u,v)$-plane in the following way:
\begin{equation}\label{eq:helicoid in hyperboloid model}
   \Hcal_{a}=\left\{x\in\HH^{3}\ \left|
   \begin{aligned}
      &x_{1}=\cosh{}u\cosh{}v, &&x_{2}=\cosh{}u\sinh{}v\\
      &x_{3}=\sinh{}u\cos(av), &&x_{4}=\sinh{}u\sin(av)
   \end{aligned}
   \right.\right\}
\end{equation}
For any constant $a\geq{}0$, the helicoid $\Hcal_a\subset\HH^3$ is an
embedded minimal surface (see \cite{Mor82}). In the hyperboloid model
$\HH^3$, the axis of the helicoid $\Hcal_{a}$ is given by
\begin{equation*}
   (\cosh{}v,\sinh{}v,0,0)\ ,
   \quad{}-\infty<v<\infty\ ,
\end{equation*}
which is the intersection of the $x_{1}x_{2}$-plane and the
three dimensional hyperboloid $\HH^3$.

In this paper, we will prove the following theorem.

\begin{maintheorem}
For a family of minimal helicoids $\{\Hcal_a\}_{a\geq{}0}$
in the hyperbolic $3$-space $\HH^3$ given by
\eqref{eq:helicoid in hyperboloid model},
there exist a constant $a_c\approx{}2.17966$ such that
the following statements are true:
  \begin{enumerate}
    \item $\Hcal_a$ is a globally stable minimal surface
          if $0\leq{}a\leq{}a_c$, and
    \item $\Hcal_a$ is an unstable minimal surface with index one
          if $a>a_c$.
  \end{enumerate}
\end{maintheorem}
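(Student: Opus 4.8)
The plan is to analyze the Jacobi operator on $\Hcal_a$ by exploiting the rotational/helicoidal symmetry to reduce the stability question to an ODE problem. First I would compute the induced metric on $\Hcal_a$ from the parametrization \eqref{eq:helicoid in hyperboloid model}, the second fundamental form, and hence $|A|^2$; since $\HH^3$ is a space form, $\overline{\Ric}(e_3) = -2$, so the potential in $\Lcal = \Delta_{\Sigma} + (|A|^2 - 2)$ is an explicit function of $u$ alone. The key structural fact is that $\Hcal_a$ is invariant under a one-parameter group of screw-motions, so one can try to separate variables: look for Jacobi fields and test functions of the form $\phi(u,v) = f(u)e^{i\mu v}$ (or real combinations). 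The operator $\Lcal$ then block-diagonalizes over the "Fourier modes" in $v$, and stability reduces to checking that each one-dimensional Schr\"odinger-type operator $L_\mu = -\frac{d}{du}\big(p(u)\frac{df}{du}\big) + q_\mu(u) f$ on the line (or half-line) has nonnegative first eigenvalue, with the worst mode being $\mu = 0$ by a standard monotonicity-in-$\mu$ argument.

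Next I would treat the $\mu = 0$ mode directly. By Theorem~\ref{thm:FCS80}, global stability is equivalent to the existence of a positive Jacobi field on all of $\Hcal_a$. The natural candidates come from ambient Killing fields of $\HH^3$: the normal component of a Killing field is always a Jacobi field, so I would write down the Killing fields of $\HH^3$ (the Lorentz rotations and boosts in $\Lbb^4$), compute the normal component $\inner{X}{e_3}$ of each along $\Hcal_a$, and search for a value (or a linear combination) that is everywhere positive. I expect that the rotation $\partial_\theta$ in the $x_3x_4$-plane, or the boost generating the translation along the axis, produces a Jacobi field whose sign is governed by exactly one transcendental condition in $a$; the threshold $a_c$ is where this quantity first develops a zero. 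Equivalently, working with the reduced ODE at $\mu = 0$, one shows the associated Sturm--Liouville problem has a positive solution precisely when $a \le a_c$, and $a_c \approx 2.17966$ is the numerically-determined root of the relevant equation.

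For part (2), when $a > a_c$, I would show the index is exactly one. Lower bound: produce one compactly supported test function (obtained by cutting off the sign-changing $\mu=0$ solution, or by a direct variational estimate) with $-\int u\Lcal u < 0$, so $\lambda_1(\Sigma) < 0$ and the index is at least one. Upper bound: using the mode decomposition, show that for $\mu \ne 0$ the operators $L_\mu$ remain nonnegative (monotonicity in $|\mu|$ plus the borderline analysis of $L_0$), and that $L_0$ itself has at most one negative eigenvalue — this follows because the reduced potential $q_0(u)$ is a single-well perturbation and the relevant ODE has at most one oscillation, so by Sturm theory at most one negative eigenvalue appears. Combining, the total number of negative eigenvalues of $\Lcal$ is exactly one.

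The main obstacle I anticipate is the borderline/threshold analysis: proving that $\mu = 0$ really is the worst mode and that stability is \emph{exactly} lost at $a_c$ (not merely near it) requires delicate control of the Sturm--Liouville problem — establishing that the first eigenvalue of $L_0$ is a strictly decreasing function of $a$ that crosses zero transversally — and the value $a_c \approx 2.17966$ is only accessible numerically, so the rigorous statement must be phrased as "there exists $a_c$ with these properties" with the numerical value computed separately. A secondary technical point is handling the non-compactness of $\Hcal_a$ carefully when passing between the exhaustion definition of $\lambda_1(\Sigma)$, the Fischer-Colbrie--Schoen criterion, and the index count; Lemma~\ref{lem:monotonicity of eigenvalue} and Theorem~\ref{thm:FCS80} are the tools that make this rigorous.
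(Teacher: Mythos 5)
Your plan is a genuinely different route from the paper, which never touches the Jacobi operator on the helicoid directly: the paper observes that $\Hcal_a$ and its conjugate catenoid share first fundamental form and $|A|^2$, hence the same Jacobi operator, and then quotes known results --- do Carmo--Dajczer for the hyperbolic/parabolic catenoids ($0<a\le 1$) and the sharp catenoid threshold $\abar_c\approx 0.49577$ of Wang for the spherical catenoids ($a>1$), with the conjugation relation $a=\coth(\abar)$ converting $\abar_c$ into $a_c=\coth(\abar_c)\approx 2.17966$. Your direct spectral approach is legitimate in outline (the induced metric is $du^2+(\cosh^2u+a^2\sinh^2u)\,dv^2$ and the potential depends only on $u$, so separation in $v$ makes sense), but as written it defers precisely the steps that constitute the proof, so there are genuine gaps rather than a complete argument.

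Concretely: (i) you never identify the equation whose root is $a_c$; saying the threshold is ``the numerically-determined root of the relevant equation'' is not a derivation, and historically the direct approach (Mori, Seo) produced only the non-sharp bounds $a\le 3\sqrt2/4$ and $a\ge\sqrt{105\pi}/8$, so getting the sharp constant is exactly the hard part you are skipping. (ii) Your candidate positive Jacobi fields are normal components of ambient Killing fields, but the sign change that governs the threshold typically comes from the Jacobi field obtained by differentiating the one-parameter family with respect to $a$ (respectively $\abar$ on the catenoid side), not from a Killing field; you would need to verify that your candidate set actually detects the transition at $a_c$, and that the first eigenvalue crosses zero transversally there --- you name this as an ``anticipated obstacle'' but do not resolve it. (iii) The mode analysis needs care: $v$ ranges over $\R$, so the decomposition is over a continuous parameter $\mu$ rather than discrete Fourier modes, the claim that $\mu=0$ is the worst mode requires a proof (it is plausible since $q_\mu=q_0+\mu^2/(\cosh^2u+a^2\sinh^2u)\ge q_0$, but you must say this and handle the exhaustion/essential-spectrum bookkeeping), and the index-one upper bound for $a>a_c$ rests on the unproved assertion that $L_0$ has at most one negative eigenvalue (``single-well perturbation, at most one oscillation'' is not an argument --- one must actually bound the number of zeros of the relevant solution or exhibit a one-dimensional negativity subspace bound). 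Until (i)--(iii) are carried out, the proposal is a program, not a proof; the paper's conjugation argument is what makes all of this unnecessary by importing the sharp catenoid result.
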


\begin{remark}In \cite[Theorem 2]{Mor82}, Mori proved that
$\Hcal_a$ is globally stable if
$a\leq{}3\sqrt{2}/4\approx{}1.06$ (see also
\cite[Theorem 5.1]{Seo11}), and
that $\Hcal_a$ is unstable if
$a\geq{}\sqrt{105\pi}/8\approx{}2.27$.
\end{remark}

\begin{notation}We use the following notations for
the $3$-dimensional hyperbolic space in this paper:
(1) $\HH^{3}$ denotes the hyperboloid model;
(2) $\B^{3}$ denotes the Pinecar{\'e} ball model; and
(3) $\H^{3}$ denotes the upper half space model.
Usually we use $\H^{3}$ to denote the hyperbolic $3$-space for the
most time.
\end{notation}

\section{Helicoids in hyperbolic space}\label{sec:helicoid-II}

In order to visualize the helicoids in the hyperbolic $3$-space,
we will study the parametric equations of helicoids in the
Poincar{\'e} ball model and upper half space model of
the hyperbolic $3$-space.

Consider the Pinecar{\'e} ball model $\B^{3}$, i.e., the unit sphere
\begin{equation*}
   \B^{3}=\{(x,y,z)\in\R^{3}\ | \ x^{2}+y^{2}+z^{2}<{}1\},
\end{equation*}
equipped with the hyperbolic metric
\begin{equation*}
   ds^{2}=\frac{4(dx^{2}+dy^{2}+dz^{2})}{(1-r^{2})^{2}}\ ,
\end{equation*}
where $r=\sqrt{x^{2}+y^{2}+z^{2}}$.
The helicoid $\Hcal_a$ in the ball model $\B^3$ is given by
\begin{equation}\label{eq:helicoid--ball model}
  \Hcal_{a}=\left\{(x,y,z)\in\B^{3}\ \left|\
     \begin{aligned}
        x&=\frac{\sinh{}u{}\cos(av)}{1+\cosh{}u{}\cosh{}v}\ , \\
        y&=\frac{\sinh{}u{}\sin(av)}{1+\cosh{}u{}\cosh{}v}\ , \\
        z&=\frac{\cosh{}u{}\sinh{}v}{1+\cosh{}u{}\cosh{}v}
     \end{aligned}\right.\right\}\ ,
\end{equation}
where $-\infty<u,v<\infty$.

\begin{figure}[htbp]
  \centering
  \includegraphics[scale=0.2]{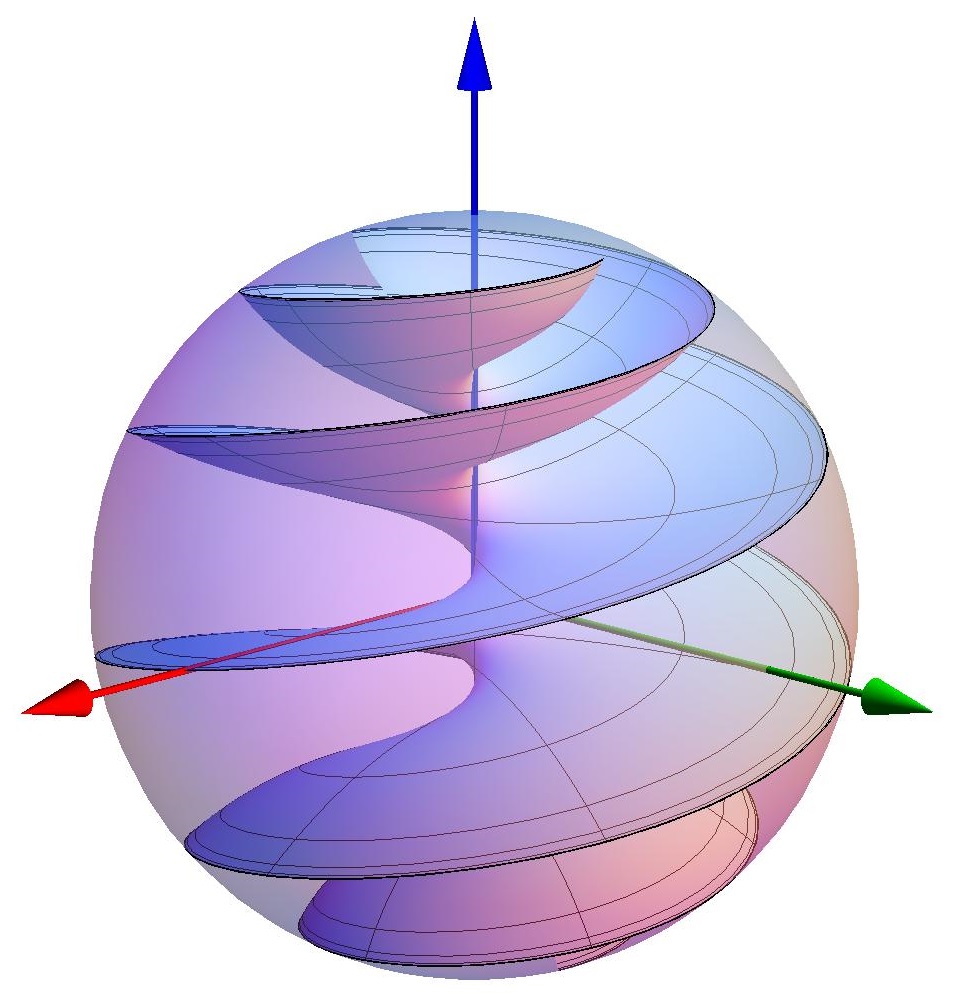}
  \caption{The helicoid $\Hcal_a$ with $a=5$ in the Poincar{\'e} ball
  model of hyperbolic space. The curves perpendicular
  to the spirals are geodesics in $\B^3$.}\label{fig:helicoid in B3}
\end{figure}

Consider the upper half space model of hyperbolic $3$-space,
i.e., a three dimensional space
\begin{equation*}
   \H^{3}=\{z+tj\ |\ z\in\C\ \text{and}\ t>0\}\ ,
\end{equation*}
which is equipped with the (hyperbolic) metric
\begin{equation*}
   ds^{2}=\frac{|dz|^{2}+dt^{2}}{t^{2}}\ ,
\end{equation*}
where $z=x+iy$ for $x,y\in\R$. In the upper half space model,
the helicoid $\Hcal_a$ is given by
(see Figure \ref{fig:helicoid in H3})
\begin{equation}\label{eq:helicoid--upper half}
  \Hcal_{a}=\{(z,t)\in\H^{3}\ |\ z=e^{v+iav}\tanh{}u\
     \text{and}\ t=e^{v}\sech{}u\}\ ,
\end{equation}
and the axis of $\Hcal_a$ is the $t$-axis, where $-\infty<u,v<\infty$.

\begin{figure}[htbp]
  \centering
  \includegraphics[scale=0.2]{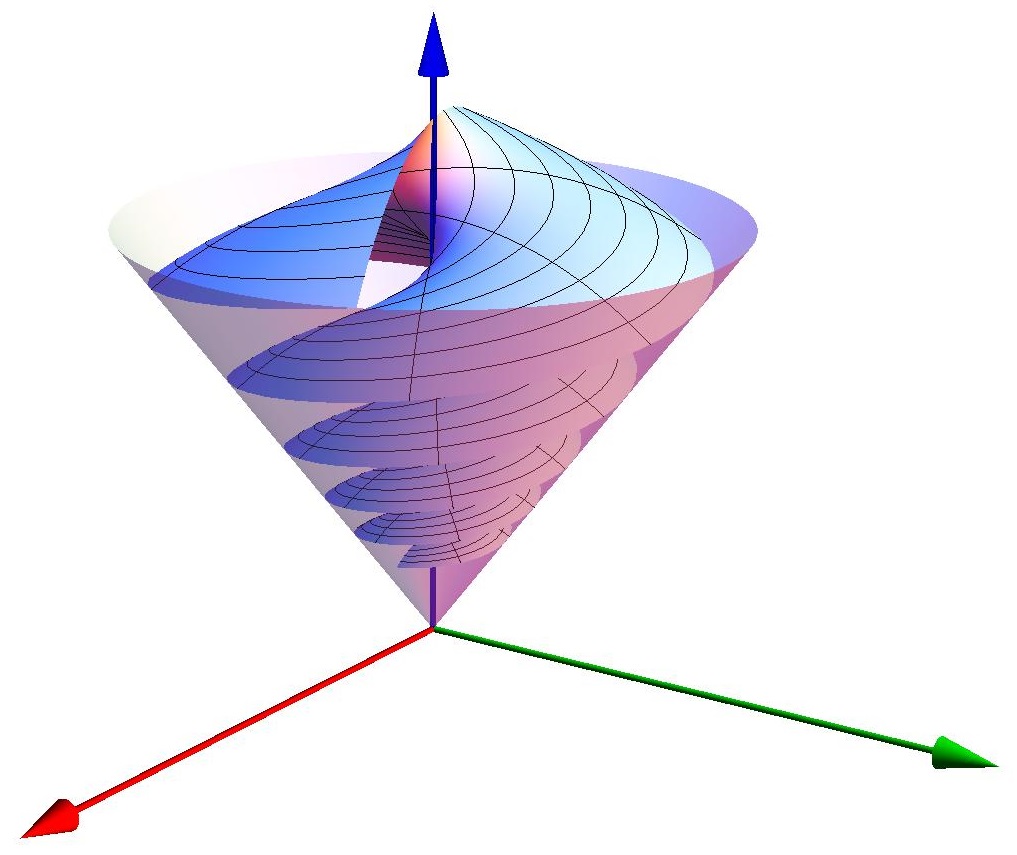}
  \caption{The helicoid $\Hcal_a$ with $a=10$ in the upper half space
  model of hyperbolic space, where the cone whose vertex is the origin
  is the $\log{}2$-neighborhood of the $t$-axis. The curves perpendicular
  to the spirals are geodesics in $\H^3$.}\label{fig:helicoid in H3}
\end{figure}

\begin{remark}
To derive the formulas in \eqref{eq:helicoid--ball model}
and \eqref{eq:helicoid--upper half},
we apply the isometries from the hyperboloid model to the
Pinecar{\'e} ball model and the upper half space model
(see \cite[$\S$A.1]{BP92}).
\end{remark}

\section{Catenoids in hyperbolic $3$-space}\label{sec:catenoids}

In this section we define the catenoids (rotation surfaces with
zero mean curvature) in hyperbolic $3$-space $\H^3$ (see
\cite{dCD83,Wang2012} for detail), since we need these to prove
Theorem~\ref{thm:main theorem}.

\subsection{Catenoids in the hyperboloid model $\HH^3$}
\label{subsec:catenoids in hyperboloid model}

We follow \cite{dCD83} to describe three types of catenoids in
$\HH^3$.
Recall that $\Lbb^4$ is the Lorentzian $4$-space whose inner product is
given by \eqref{eq:Lorentzian inner product}. Its isometry group is
$\SO^{+}(1,3)$. For any subspace $P$ of $\Lbb^4$, let $\O(P)$ be
the subgroup of $\SO^{+}(1,3)$ which leaves $P$ pointwise fixed.

\begin{definition}\label{def:rotation surface}
Let $\{e_1,\ldots,e_4\}$ be an orthonormal basis of
$\Lbb^4$ (it may not be the standard orthonormal basis). Suppose
that $P^2=\span\{e_3,e_4\}$, $P^3=\span\{e_1,e_3,e_4\}$ and
$P^3\cap\HH^3\ne\emptyset$. Let $C$ be a regular curve in
$P^3\cap\HH^3=\HH^2$ that does not meet $P^2$.
The orbit of $C$ under the action of $\O(P^2)$ is called a
\emph{rotation surface} \emph{generated by $C$ around $P^2$}.
\end{definition}

If a rotation surface in Definition \ref{def:rotation surface}
has mean curvature zero, then it's called a \emph{catenoid} in
$\HH^3$. There are three types of catenoids
in $\HH^3$: spherical catenoids, hyperbolic catenoids, and
parabolic catenoids.

\subsubsection{Spherical catenoids}
\label{subsub:spherical catenoild-lorentz}
The spherical catenoid is obtained as follows.
Let $\{e_1,\ldots,e_4\}$ be an orthonormal basis of $\Lbb^4$ such
that $\inner{e_4}{e_4}=-1$. Suppose that $P^2$, $P^3$ and $C$ are the
same as those defined in Definition \ref{def:rotation surface}.
For any point $x\in\Lbb^4$, write $x=\sum{}x_{k}e_{k}$. If
the curve $C$ is parametrized by
\begin{equation}\label{eq:spherical catenoid-x1(s)}
   x_{1}(s)=\sqrt{\atilde\cosh(2s)-1/2}\ ,
   \quad \atilde>1/2\,
\end{equation}
and
\begin{equation}\label{eq:spherical catenoid-x3(s)-x4(s)}
   x_{3}(s)=\sqrt{x_{1}^{2}(s)+1}\,\sinh(\phi(s))\ ,\
   x_{4}(s)=\sqrt{x_{1}^{2}(s)+1}\,\cosh(\phi(s))\ ,
\end{equation}
where
\begin{equation}
   \phi(s)=\int_{0}^{s}\frac{\sqrt{\atilde{}^2-1/4}}
   {(\atilde\cosh(2\sigma)+1/2)
   \sqrt{\atilde\cosh(2\sigma)-1/2}}\,d\sigma\ ,
\end{equation}
then the rotation surface, denoted by $\Mscr_{1}^2(\atilde)$,
is a complete minimal surface in $\HH^3$,
which is called a \emph{spherical catenoid}.

\subsubsection{Hyperbolic catenoids}
\label{subsub:hyperbolic catenoild-lorentz}
The hyperbolic catenoid is obtained as follows.
Let $\{e_1,\ldots,e_4\}$ be an orthonormal basis of $\Lbb^4$
such that $\inner{e_1}{e_1}=-1$. Suppose that $P^2$, $P^3$ and $C$ are the
same as those defined in Definition \ref{def:rotation surface}.
For any point $x\in\Lbb^4$, write $x=\sum{}x_{k}e_{k}$. If
the curve $C$ is parametrized by
\begin{equation}\label{eq:hyperbolic catenoid-x1(s)}
   x_{1}(s)=\sqrt{\atilde\cosh(2s)+1/2}\ ,
   \quad \atilde>1/2\,
\end{equation}
and
\begin{equation}\label{eq:hyperbolic catenoid-x3(s)-x4(s)}
   x_{3}(s)=\sqrt{x_{1}^{2}(s)-1}\,\sin(\phi(s))\ ,\
   x_{4}(s)=\sqrt{x_{1}^{2}(s)-1}\,\cos(\phi(s))\ ,
\end{equation}
where
\begin{equation}
   \phi(s)=\int_{0}^{s}\frac{\sqrt{\atilde{}^2-1/4}}
   {(\atilde\cosh(2\sigma)-1/2)
   \sqrt{\atilde\cosh(2\sigma)+1/2}}\,d\sigma\ ,
\end{equation}
then the rotation surface, denoted by $\Mscr_{-1}^2(\atilde)$,
is a complete minimal surface in $\HH^3$,
which is called a \emph{hyperbolic catenoid}.

\subsubsection{Parabolic catenoids}
\label{subsub:parabolic catenoild-lorentz}
The parabolic catenoid is obtained as follows.
Let $\{e_1,\ldots,e_4\}$ be a pseudo-orthonormal basis of $\Lbb^4$ such
that $\inner{e_1}{e_1}=\inner{e_3}{e_3}=0$, $\inner{e_1}{e_3}=-1$ and
$\inner{e_j}{e_k}=\delta_{jk}$ for $j=2,4$ and $k=1,2,3,4$
(see \cite[P. 689]{dCD83}).
Suppose that $P^2$, $P^3$ and $C$ are the
same as those defined in Definition \ref{def:rotation surface}.
For any point $x\in\Lbb^4$, write $x=\sum{}x_{k}e_{k}$. If
the curve $C$ is parametrized by
\begin{equation}\label{eq:parabolic catenoid-x1(s)}
   x_{1}(s)=\sqrt{\cosh(2s)}\ ,
\end{equation}
and
\begin{equation}\label{eq:parabolic catenoid-x3(s)-x4(s)}
   x_{4}(s)=x_{1}(s)\int_{0}^{s}\frac{d\sigma}
   {\sqrt{\cosh^3(2\sigma)}}\ ,\
   x_{3}(s)=\frac{1+x_{4}^2(s)}{x_{1}(s)}\ ,
\end{equation}
then the rotation surface, denoted by $\Mscr_{0}^2$,
is a complete minimal surface in $\HH^3$, which is called a
\emph{parabolic catenoid}.
Up to isometries, the parabolic catenoid $\Mscr_{0}^2$ is unique
(see \cite[Theorem (3.14)]{dCD83}).

\subsection{Catenoids in the Poincar{\'e} ball model $\B^3$}
\label{subsec:catenoids in Poincare model}
We follow \cite{Wang2012} to describe the spherical catenoids in
$\B^3$. Suppose that $G$ is a subgroup of $\Isom^{+}(\B^{3})$ that
leaves a geodesic $\gamma\subset\B^{3}$ pointwise fixed. We call $G$ the
\emph{spherical group} of $\B^{3}$ and $\gamma$ the \emph{rotation axis}
of $G$. A surface in $\B^{3}$ invariant under $G$ is called a
\emph{spherical surface} or a \emph{surface of revolution}.

Now suppose that $G$ is the spherical group of $\B^{3}$ along the geodesic
\begin{equation}\label{eq:rotation axis}
   \gamma_{0}=\{(u,0,0)\in\B^3\ |\ -1<u<1\}\ ,
\end{equation}
then $\B^3/G\cong\B_{+}^2$, where
$\B_{+}^2=\{(u,v)\in\B^2\ |\ v\geq{}0\}$.

For any point $p=(u,v)\in\B_{+}^2$, there is a unique geodesic segment
$\gamma'$ passing through $p$ that is perpendicular to $\gamma_{0}$ at $q$.
Let $x=d(O,q)$ and $y=d(p,q)=d(p,\gamma_{0})$
(see Figure~\ref{fig:intrinsic metric}), where
$d(\cdot,\cdot)$ denotes the hyperbolic distance.
It's well known that $\B_{+}^2$ can be equipped with
the \emph{metric of warped product} in terms
of the parameters $x$ and $y$ as follows:
\begin{equation}\label{eq:warped product metric}
   ds^2=\cosh^{2}y\cdot{}dx^2+dy^2\ ,
\end{equation}
where $dx$ represents the hyperbolic metric on the geodesic
$\gamma_0$ in \eqref{eq:rotation axis}.

\begin{figure}[htbp]
  \begin{center}
     \includegraphics[scale=1]{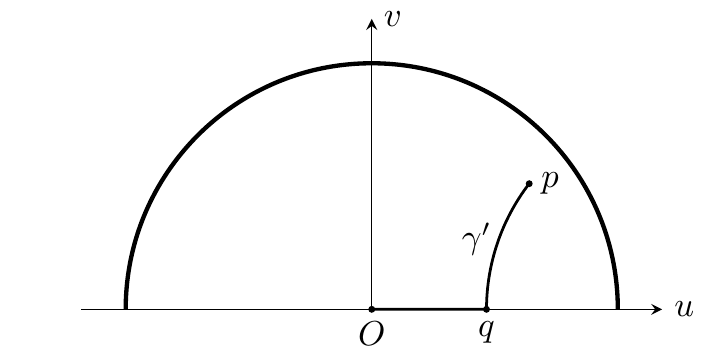}
   \end{center}
  \caption{For a point $p$ in $\B_{+}^2$ with the warped product metric,
  its coordinates $(x,y)$ are defined by $x=d(O,q)$ and
  $y=d(p,q)$.}\label{fig:intrinsic metric}
\end{figure}

Suppose that $\CC$ is a surface of revolution in $\B^3$ with respect to
the geodesic $\gamma_{0}$, then the curve $\sigma=\CC\cap\B_{+}^{2}$ is
called the \emph{generating curve} of $\CC$. If the curve
$\sigma_{\abar}\subset\B_{+}^{2}$ is given by the parametric equations:
\begin{equation*}
   x(t)=\pm\int_{\abar}^{t}\frac{\sinh(2\abar)}{\cosh{}\tau}
           \frac{d\tau}{\sqrt{\sinh^2(2\tau)-\sinh^2(2\abar)}}\ ,
\end{equation*}
and $y(t)=t$, where $\abar>0$ is a constant and $t\geq{}\abar$, then the
spherical surface generated by $\sigma_{\abar}$, denoted by $\CC_{\abar}$,
is a complete minimal surface in $\B^3$, which is called a
\emph{spherical catenoid}.

The following result
will be used for proving Theorem \ref{thm:main theorem}.

\begin{lemma}[B{\'e}rard and Sa Earp]
\label{lem:relation between spherical catenoids}
The spherical catenoid $\Mscr_{1}^{2}(\atilde)$ defined
in $\S${}\ref{subsub:spherical catenoild-lorentz} is isometric to the
spherical catenoid $\CC_{\abar}$, where
\begin{equation}\label{eq:relation between catenoids}
   2\atilde=\cosh(2\abar)\ .
\end{equation}
\end{lemma}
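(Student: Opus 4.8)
The goal is to prove that the spherical catenoid $\Mscr_1^2(\atilde)$ from the hyperboloid model is isometric to $\CC_{\abar}$ from the ball model, under the reparametrization $2\atilde=\cosh(2\abar)$.

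Let me think about what's really going on. We have two descriptions of "the" spherical catenoid in $\mathbb{H}^3$—one in the hyperboloid model $\mathbb{L}^4$ (following do Carmo–Dajczer) with parameter $\atilde > 1/2$, and one in the ball model $\mathbb{B}^3$ (following Wang) with parameter $\abar > 0$. A spherical catenoid is a rotation surface about a geodesic with zero mean curvature. Both are rotation surfaces about a geodesic axis. The key point: rotation surfaces of revolution about a geodesic in $\mathbb{H}^3$, once you quotient by the rotation group, become curves in a 2-dimensional warped-product space, and the minimal surface equation becomes an ODE. The "waist" or "neck radius" (the minimal distance from the axis) is the single parameter. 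So really what needs to be checked is that the neck radius of $\Mscr_1^2(\atilde)$ equals the neck radius of $\CC_{\abar}$—call it $\abar$—which, given the formulas, should translate to $2\atilde = \cosh(2\abar)$.

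Here's how I'd structure it.

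First, I would identify the neck circle of $\Mscr_1^2(\atilde)$. From \eqref{eq:spherical catenoid-x1(s)}, $x_1(s) = \sqrt{\atilde\cosh(2s) - 1/2}$ is minimized at $s=0$, giving $x_1(0) = \sqrt{\atilde - 1/2}$, and at $s = 0$ we have $\phi(0) = 0$, so $x_3(0) = 0$, $x_4(0) = \sqrt{x_1(0)^2 + 1} = \sqrt{\atilde + 1/2}$. The rotation axis $P^2 = \span\{e_3, e_4\}$—wait, in $\S$\ref{subsub:spherical catenoild-lorentz} the axis is $P^2 = \span\{e_3,e_4\}$ with $\langle e_4,e_4\rangle = -1$, so the axis geodesic is $P^2 \cap \mathbb{H}^3 = \{(0,0,\sinh r, \cosh r)\}$. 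The hyperbolic distance from the neck point $(\,x_1(0), 0, 0, x_4(0)\,) = (\sqrt{\atilde-1/2},\,0,\,0,\,\sqrt{\atilde+1/2}\,)$ to this axis: the distance from a point to a geodesic is computed via the Lorentzian inner products. I would compute $\cosh(\text{dist}) $ or $\sinh(\text{dist})$ using the standard formula—distance from $p$ to the geodesic $\gamma$ through $e_4$-direction is given by $\cosh^2 d = -\langle p, e_4\rangle^2 / \langle e_4,e_4\rangle$ type expressions, carefully. This should yield $\sinh^2(\text{neck dist}) = \atilde - 1/2$ or a comparable clean expression.

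Second, I would identify the neck of $\CC_{\abar}$. From the parametrization, $y(t) = t$ with $t \geq \abar$, and the $x$-integral starts at $t = \abar$ where the integrand has a square-root singularity (turning point), meaning $t = \abar$ is where the generating curve $\sigma_{\abar}$ meets its closest approach; since $y = d(p, \gamma_0)$ is the distance to the axis, the neck distance is exactly $\abar$. So I must check $\sinh^2 \abar = \atilde - 1/2$, i.e. $\cosh(2\abar) = 1 + 2\sinh^2\abar = 2\atilde$, which is precisely \eqref{eq:relation between catenoids}.

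Third—and this is where the real work is—I must verify that matching the neck radius actually forces the two surfaces to be isometric, not merely "tangent at the neck." This requires showing the ODE governing a minimal rotation surface about a geodesic in $\mathbb{H}^3$ (in warped-product coordinates $ds^2 = \cosh^2 y\, dx^2 + dy^2$) has, for each neck radius, a unique solution curve through the neck (by symmetry/uniqueness of ODE solutions with the turning-point initial condition). I would derive this ODE from the zero-mean-curvature condition for a graph $x = x(y)$ in the warped metric—it's a first-order separable equation after one integration, with the neck radius as the constant of integration—and check that do Carmo–Dajczer's parametrization \eqref{eq:spherical catenoid-x1(s)}–\eqref{eq:spherical catenoid-x3(s)-x4(s)}, after pushing forward to $\mathbb{B}^3$ and projecting to $\mathbb{B}_+^2$, satisfies the same ODE with the same constant. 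Alternatively, and more cleanly, I can cite that both constructions produce minimal rotation surfaces and invoke a uniqueness statement: up to ambient isometry, a complete minimal spherical catenoid in $\mathbb{H}^3$ is determined by its neck radius (this is essentially in \cite{dCD83} and \cite{Wang2012}).

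\textbf{The main obstacle.} The genuinely delicate step is the bookkeeping of the isometry from $\mathbb{L}^4$ (hyperboloid model) to $\mathbb{B}^3$: I need to track how the axis $P^2 = \span\{e_3,e_4\}$ and the generating curve $C$ map over, confirm that the image is indeed a surface of revolution about the image geodesic, and that the intrinsic "distance to axis" coordinate $y$ matches the quantity built from $x_1(s)$. Getting the distance-to-geodesic formula right in the Lorentzian model (signs, which coordinate is timelike) is the place most likely to hide an error, and reconciling $\atilde > 1/2$ with $\abar > 0$ via $2\atilde = \cosh 2\abar$ is the consistency check that everything lines up. I expect the cleanest route is: (i) compute the neck distance on each side, (ii) invoke uniqueness of the minimal rotation surface with given neck radius, rather than matching full parametrizations term by term.
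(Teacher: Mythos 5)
Your proposal is correct and follows essentially the same route as the paper: the paper's proof likewise computes the distance from each generating curve to its rotation axis ($\abar$ for $\CC_{\abar}$, and $\sinh^{-1}\bigl(\sqrt{\atilde-1/2}\,\bigr)$ for $\Mscr_{1}^{2}(\atilde)$) and equates the two to obtain $2\atilde=\cosh(2\abar)$. Your third step, making explicit the uniqueness of the complete minimal rotation surface with a given neck radius, is left implicit in the paper's proof, but it is a refinement of the same argument rather than a different approach.
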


\begin{proof}The spherical catenoid $\CC_{\abar}$ is
obtained by rotating the generating curve
$\sigma_{\abar}$ along the axis $\gamma_{0}$.
The distance
between $\sigma_{\abar}$ and $\gamma_{0}$ is $\abar$.

The spherical catenoid $\Mscr_{1}^{2}(\atilde)$
defined in $\S${}\ref{subsub:spherical catenoild-lorentz}
can be obtained by rotating the generating
curve $C$ given by \eqref{eq:spherical catenoid-x1(s)}
and \eqref{eq:spherical catenoid-x3(s)-x4(s)} along the
geodesic $P^2\cap\HH^{3}$. The distance
between $C$ and $P^2\cap\HH^{3}$ is
$\sinh^{-1}\left(\sqrt{\atilde-1/2}\,\right)$.

Hence the spherical catenoid $\Mscr_{1}^{2}(\atilde)$ is
isometric to the spherical catenoid $\CC_{\abar}$ if and only if
$\abar=\sinh^{-1}\left(\sqrt{\atilde-1/2}\,\right)$, which
implies \eqref{eq:relation between catenoids}.
\end{proof}

\begin{remark}The equation \eqref{eq:relation between catenoids}
can be found in \cite[p. 34]{BSE09}.
\end{remark}

\section{Conjugate minimal surface}

Let $\Mbar{}^{3}(c)$ be the $3$-dimensional space form whose
sectional curvature is a constant $c$.

\begin{definition}[{\cite[pp. 699-700]{dCD83}}]
\label{def:conjugate minimal immersion}
Let $f:\Sigma\to\Mbar{}^{3}(c)$ be a minimal surface in isothermal
parametrs $(\sigma,t)$. Denote by
\begin{equation*}
   \I=E(d\sigma^2+dt^2)
   \quad\text{and}\quad
   \II=\beta_{11}d\sigma^2+2\beta_{12}d\sigma{}dt+
   \beta_{22}dt^2
\end{equation*}
the first and second fundamental forms of $f$, respectively.

Set $\psi=\beta_{11}-i\beta_{12}$ and define a family of
quadratic form depending on a parameter $\theta$,
$0\leq\theta\leq{}2\pi$, by
\begin{equation}
   \beta_{11}(\theta)=\Re(e^{i\theta}\psi)\ ,
   \quad
   \beta_{22}(\theta)=-\Re(e^{i\theta}\psi)\ ,
   \quad
   \beta_{12}(\theta)=\Im(e^{i\theta}\psi)\ .
\end{equation}
Then the following forms
\begin{equation*}
   \I_{\theta}=\I
   \quad\text{and}\quad
   \II_{\theta}=\beta_{11}(\theta)d\sigma^2+
   2\beta_{12}(\theta)d\sigma{}dt+
   \beta_{22}(\theta)dt^2
\end{equation*}
give rise to an isometry family
$f_{\theta}:\widetilde\Sigma\to\Mbar{}^{3}(c)$ of minimal
immersions, where $\widetilde\Sigma$ is the universal covering
of $\Sigma$. The immersion $f_{\pi/2}$ is called the
\emph{conjugate immersion} to $f_0=f$.
\end{definition}

The following result is obvious, but it's crucial to prove
Theorem \ref{thm:main theorem}.

\begin{lemma}Let $f:\Sigma\to\Mbar{}^{3}(c)$ be an immersed
minimal surface, and let
$f_{\pi/2}:\widetilde\Sigma\to\Mbar{}^{3}(c)$ be its conjugate
minimal surface, where $\widetilde\Sigma$ is the universal
covering of $\Sigma$. Then the minimal immersion $f$ is
globally stable if and only if its conjugate immersion
$f_{\pi/2}$ is globally stable.
\end{lemma}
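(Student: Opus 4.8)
The plan is to exploit the fact that the conjugate family $f_\theta$ consists of \emph{isometric} immersions (they all share the first fundamental form $\I_\theta = \I$) and that the Jacobi operator of a minimal surface in a space form depends only on the intrinsic geometry together with $|A|^2$. First I would observe that, since $\Mbar^3(c)$ has constant sectional curvature $c$, the Ricci curvature term satisfies $\overline{\Ric}(e_3) = 2c$ for every unit normal, so the Jacobi operator \eqref{eq:Jacobi operator I} reduces to $\Lcal = \Delta_\Sigma + |A|^2 + 2c$. Next I would compute $|A|^2$ in terms of the data $(\sigma,t)$: for a minimal surface the trace of $\II$ vanishes, so writing $\psi = \beta_{11} - i\beta_{12}$ as in Definition~\ref{def:conjugate minimal immersion}, one has $\beta_{22} = -\beta_{11}$ and hence $|A|^2 = \tfrac{2}{E^2}(\beta_{11}^2 + \beta_{12}^2) = \tfrac{2}{E^2}|\psi|^2$. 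The key point is that passing from $f_0$ to $f_{\pi/2}$ replaces $\psi$ by $e^{i\pi/2}\psi = i\psi$, which has the same modulus $|\psi|$; since $E$ is also unchanged, $|A_\theta|^2 = |A|^2$ for all $\theta$. Therefore the Jacobi operators of $f$ and of $f_{\pi/2}$ are literally the same differential operator on the same Riemannian surface (after lifting $f$ to $\widetilde\Sigma$, which does not affect local stability computations, see below).

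With this identification in hand, the rest is a matter of unwinding the definition of global stability. I would argue as follows. Lifting $f$ to the universal cover $\widetilde\Sigma$ does not change the stability status: any compact subdomain $\Omega \subset \Sigma$ lifts to compact subdomains upstairs with the same first eigenvalue, and conversely any compact $\widetilde\Omega \subset \widetilde\Sigma$ projects into $\Sigma$ in a way that, together with Lemma~\ref{lem:monotonicity of eigenvalue}, controls $\lambda_1(\widetilde\Omega)$ from below by $\lambda_1$ of a slightly larger domain downstairs; in fact $f$ is globally stable iff its lift $\widetilde f$ is, because global stability is equivalent (Theorem~\ref{thm:FCS80}) to the existence of a positive solution of $\Lcal\phi = 0$, and such a solution on $\Sigma$ pulls back to one on $\widetilde\Sigma$, while a positive solution on $\widetilde\Sigma$ together with the completeness of $\widetilde\Sigma$ again gives stability of $\widetilde f$. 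Now both $\widetilde f = \widetilde{f_0}$ and $f_{\pi/2}$ are minimal immersions of the \emph{same} complete Riemannian surface $\widetilde\Sigma$ into $\Mbar^3(c)$ with the \emph{same} Jacobi operator $\Lcal = \Delta_{\widetilde\Sigma} + |A|^2 + 2c$. Consequently $\lambda_1(\Omega)$ is the same number for both immersions for every compact $\Omega \subset \widetilde\Sigma$, hence $\lambda_1(\widetilde\Sigma)$ agrees, and so one is globally stable precisely when the other is. Combining with the previous sentence, $f$ is globally stable iff $f_{\pi/2}$ is globally stable.

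The only genuinely delicate point is the reduction from $\Sigma$ to its universal cover $\widetilde\Sigma$: strictly speaking the lemma compares $f$ on $\Sigma$ with $f_{\pi/2}$ on $\widetilde\Sigma$, so one must be careful that "globally stable" for $f$ (tested on compact subdomains of $\Sigma$) matches "globally stable" for $\widetilde f$ (tested on compact subdomains of $\widetilde\Sigma$). I would handle this cleanly via Theorem~\ref{thm:FCS80}: a positive Jacobi field on $\Sigma$ pulls back to a positive Jacobi field on $\widetilde\Sigma$, giving stability of $\widetilde f$; conversely, if $f$ is \emph{not} stable, some compact $\Omega\subset\Sigma$ has $\lambda_1(\Omega)<0$, and choosing a small enough evenly-covered piece one lifts $\Omega$ isometrically into $\widetilde\Sigma$ to produce a compact subdomain with the same negative first eigenvalue, so $\widetilde f$ is not stable either. (When $\Sigma$ is already simply connected, as will be the case for the helicoid $\Hcal_a$, this subtlety evaporates entirely.) Everything else — the curvature computation and the invariance of $|A|^2$ under the conjugation $\psi \mapsto e^{i\theta}\psi$ — is immediate from the formulas in Definition~\ref{def:conjugate minimal immersion}.
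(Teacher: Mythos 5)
Your central argument is the same as the paper's: in the space form $\Mbar{}^{3}(c)$ one has $\overline{\Ric}(e_3)=2c$, the Laplacian is determined by the shared first fundamental form $\I_\theta=\I$, and by minimality $|A|^2=(\beta_{11}^2+2\beta_{12}^2+\beta_{22}^2)/E^2=2|\psi|^2/E^2$ is unchanged under $\psi\mapsto e^{i\theta}\psi$; hence the lift $\widetilde f$ and the conjugate $f_{\pi/2}$ carry the identical Jacobi operator on $\widetilde\Sigma$, and a positive Jacobi field on $\Sigma$ pulls back to $\widetilde\Sigma$, so by Theorem \ref{thm:FCS80} stability of $f$ gives stability of $f_{\pi/2}$. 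This is precisely the paper's proof, with the invariance of $|A|^2$ spelled out slightly more explicitly.

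The one place where you go beyond the paper --- passing from stability of $\widetilde f$ (equivalently of $f_{\pi/2}$) back down to stability of $f$ --- is where your argument has a genuine gap. The proposed fix, ``if $f$ is not stable, some compact $\Omega\subset\Sigma$ has $\lambda_1(\Omega)<0$, and choosing a small enough evenly-covered piece one lifts $\Omega$ isometrically,'' does not work: by Lemma \ref{lem:monotonicity of eigenvalue}, shrinking a domain only increases $\lambda_1$, so a small evenly covered piece of an unstable $\Omega$ may well be stable; and in the relevant case where $\Omega$ carries a loop that is nontrivial in $\pi_1(\Sigma)$ (e.g.\ an annular neighborhood of the core circle of a spherical catenoid), $\Omega$ admits no isometric lift to $\widetilde\Sigma$ at all, since its preimage is noncompact. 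More fundamentally, ``the universal cover is stable $\Rightarrow$ the base is stable'' is not a formal consequence of the Fischer-Colbrie--Schoen criterion: a positive Jacobi field on $\widetilde\Sigma$ need not be invariant under the deck transformations and so need not descend to $\Sigma$, and the bottom of the spectrum can genuinely increase under a non-amenable covering. To be fair, the paper's own proof also only establishes the direction ``$f$ stable $\Rightarrow\widetilde f$ stable $\Rightarrow f_{\pi/2}$ stable,'' which is the direction actually used for the stability half of Theorem \ref{thm:main theorem}; but since the lemma is stated as an equivalence, your converse step needs a real argument rather than the shrinking trick. Note also that your closing parenthetical is backwards for the intended application: there $\Sigma$ is the catenoid (an annulus), and it is the conjugate $f_{\pi/2}$, the helicoid $\Hcal_a$, that lives on the simply connected $\widetilde\Sigma$, so the subtlety does not evaporate in the case that matters.
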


\begin{proof}Let $\widetilde{f}$ be the universal lifting
of $f$, then
$\widetilde{f}:\widetilde\Sigma\to\Mbar{}^{3}(c)$ is a minimal
immersion. It's well known that the global stability of $f$
implies the global stability of $\widetilde{f}$.
Actually the minimal surfaces $\Sigma$ and $\widetilde\Sigma$
share the same Jacobi operator defined by
\eqref{eq:Jacobi operator I}. If $\Sigma$ is globally stable,
there exists a positive Jacobi field on $\Sigma$ according to
Theorem \ref{thm:FCS80}, which implies that $\widetilde\Sigma$
is also globally stable, since the corresponding positive Jacobi
field on $\widetilde\Sigma$ is given by composing.

Next we claim that $\widetilde{f}$ and $f_{\pi/2}$ share the
same Jacobi operator. In fact, since the Laplacian depends only
on the first fundamental form, $\widetilde{f}$ and $f_{\pi/2}$
have the same Laplacian. Furthermore according to the definition
of the conjugate minimal immersion, $\widetilde{f}$ and $f_{\pi/2}$
have the same square norm of the second fundamental form, i.e.
$|A|^2=(\beta_{11}^2+2\beta_{12}^2+\beta_{22}^2)/E^2$, where we
used the notations in Definition
\ref{def:conjugate minimal immersion}.

By \eqref{eq:Jacobi operator I} and Theorem \ref{thm:FCS80},
the proof of the lemma is complete.
\end{proof}

\section{Stability of helicoids}

For hyperbolic and parabolic catenoids in $\HH^3$ defined in
$\S$\ref{subsub:hyperbolic catenoild-lorentz}
and $\S$\ref{subsub:parabolic catenoild-lorentz},
do Carmo and Dajczer proved that they are
globally stable (see \cite[Theorem (5.5)]{dCD83}). Furthermore,
Candel proved that the hyperbolic and parabolic catenoids are
least area minimal surfaces (see \cite[p. 3574]{Can07}).

The following result can be found in \cite[Theorem (3.31)]{dCD83}.

\begin{theorem}[do Carmo-Dajczer]
\label{thm:conjugate minimal surfaces}
Let $f:\Mscr^2\to\HH^3$ be a minimal catenoid defined in
$\S${}\ref{subsec:catenoids in hyperboloid model}.
Its conjugate minimal surface is the geodesically-ruled minimal surface
$\Hcal_a$ given by \eqref{eq:helicoid in hyperboloid model} where
\begin{equation}\label{eq:relations for CMS}
   \begin{cases}
      a=\sqrt{(\atilde+1/2)/(\atilde-1/2)}\ ,
             &\text{if}\ \Mscr^2=\Mscr_{1}^{2}(\atilde)\
              \text{is spherical}\ ,\\
      a=\sqrt{(\atilde-1/2)/(\atilde+1/2)}\ ,
             &\text{if}\ \Mscr^2=\Mscr_{-1}^{2}(\atilde)\
              \text{is hyperbolic}\ ,\\
      a=1\ , &\text{if}\ \Mscr^2=\Mscr_{0}^{2}\
              \text{is parabolic}\ .
   \end{cases}
\end{equation}
\end{theorem}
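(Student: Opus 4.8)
The plan is to invoke the fundamental theorem of surface theory (Bonnet's theorem) in the space form $\HH^3$: a simply connected immersed surface is determined, up to an ambient isometry, by its first and second fundamental forms, subject to the Gauss--Codazzi equations. Thus it suffices to compute $\I$ and $\II$ for the conjugate of each catenoid of $\S$\ref{subsec:catenoids in hyperboloid model}, to compute $\I$ and $\II$ for the helicoid $\Hcal_a$ of \eqref{eq:helicoid in hyperboloid model}, and to exhibit a change of parameters under which the two pairs coincide; the admissible change will force the relations \eqref{eq:relations for CMS}.

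First I would put each catenoid $f:\Mscr^2\to\HH^3$ into isothermal parameters. Writing $\omega$ for the orbit parameter of the one-parameter group $\O(P^2)$ and $s$ for the arc length of the profile curve $C$, a direct computation using \eqref{eq:spherical catenoid-x1(s)}--\eqref{eq:parabolic catenoid-x3(s)-x4(s)} shows that the meridians and parallels are orthogonal, that $s$ is indeed arc length, and that $\inner{X_\omega}{X_\omega}=x_1(s)^2$. Setting $t=\int_0^s d\tau/x_1(\tau)$ and $\sigma=\omega$ makes the parameters isothermal with conformal factor $E=x_1(s)^2$, namely
\begin{equation*}
   E=\atilde\cosh(2s)-\tfrac12,\qquad
   E=\atilde\cosh(2s)+\tfrac12,\qquad
   E=\cosh(2s)
\end{equation*}
for the spherical, hyperbolic, and parabolic catenoids respectively. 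Because the parallels and meridians of a rotation surface are lines of curvature, $\II$ is diagonal in $(\sigma,t)$; minimality gives $\beta_{11}=-\beta_{22}=:\kappa$ and $\beta_{12}=0$, so in the notation of Definition \ref{def:conjugate minimal immersion} the form $\psi=\kappa$ is real. Consequently the conjugate immersion $f_{\pi/2}$ has the \emph{same} conformal factor $E$ and the purely off-diagonal form $\II_{\pi/2}=2\kappa\,d\sigma\,dt$, since $\Re(i\psi)=0$ and $\Im(i\psi)=\kappa$.

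Next I would compute the fundamental forms of $\Hcal_a$ directly from \eqref{eq:helicoid in hyperboloid model}. One finds $\inner{x_u}{x_u}=1$, $\inner{x_u}{x_v}=0$, and $\inner{x_v}{x_v}=\cosh^2u+a^2\sinh^2u$, so the induced metric is $du^2+(\cosh^2u+a^2\sinh^2u)\,dv^2$; the substitution $\sigma'=v$, $dt'=du/\sqrt{\cosh^2u+a^2\sinh^2u}$ renders it isothermal with conformal factor $E_{\mathrm{hel}}=\cosh^2u+a^2\sinh^2u$. The rulings $v=\text{const}$ satisfy $x_{uu}=x$, hence are geodesics of $\HH^3$ (so $\Hcal_a$ is geodesically ruled), and therefore asymptotic lines, forcing $\beta_{11}^{\mathrm{hel}}=0$; minimality then gives $\beta_{22}^{\mathrm{hel}}=0$, so $\II_{\mathrm{hel}}$ is again purely off-diagonal. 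The decisive step is now to match the first fundamental forms: allowing a constant conformal rescaling $\sigma'=\nu\sigma$, $t'=\nu t$ of the isothermal chart and identifying $s=u$, the requirement $\I_{\pi/2}=\I_{\mathrm{hel}}$ becomes the identity $E_{\mathrm{hel}}=\nu^{-2}E$ in $u$. Writing both sides in the form $p\cosh(2u)+q$, this single identity between two affine functions of $\cosh(2u)$ admits a solution precisely when $a$ takes the value recorded in \eqref{eq:relations for CMS}, and it simultaneously fixes $\nu^{-2}=1/(\atilde-\tfrac12)$, $1/(\atilde+\tfrac12)$, and $1$ in the three cases. With $\I$ matched, the Gauss equation $K=-1-\beta_{12}^2/E^2$ shows $f_{\pi/2}$ and $\Hcal_a$ share the same $\beta_{12}^2$, and Codazzi pins down the off-diagonal coefficient up to sign, so $\II_{\pi/2}=\II_{\mathrm{hel}}$ in the matched chart. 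Bonnet's theorem then produces an isometry of $\HH^3$ carrying $f_{\pi/2}$ onto $\Hcal_a$ as immersions of the universal cover.

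I expect the main obstacle to be this exact (not merely conformal) matching of the first fundamental forms, which is exactly what generates \eqref{eq:relations for CMS}: one must track the constant rescaling $\nu$ of the isothermal coordinate with care, and the parabolic catenoid requires separate treatment because its defining frame $\{e_1,\dots,e_4\}$ is only pseudo-orthonormal (with null $e_1,e_3$), so the orbit computation $\inner{X_\omega}{X_\omega}=x_1^2$ must be redone for a null rotation. A secondary subtlety is verifying that Codazzi selects the correct sign of $\beta_{12}$, i.e.\ that it is the conjugate $f_{\pi/2}$—rather than some other member of the associated family $f_\theta$—that is congruent to $\Hcal_a$.
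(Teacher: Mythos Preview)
The paper does not prove this theorem at all: it is quoted verbatim as \cite[Theorem (3.31)]{dCD83} and used as a black box. So there is no in-paper argument to compare your proposal against.

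That said, your sketch is essentially the do Carmo--Dajczer strategy and is sound in outline. The computation you describe does work: writing $E_{\mathrm{hel}}=\tfrac{1+a^2}{2}\cosh(2u)+\tfrac{1-a^2}{2}$ and matching it to $c\cdot\bigl(\atilde\cosh(2s)\mp\tfrac12\bigr)$ with $s=u$ forces exactly the relations in \eqref{eq:relations for CMS} and fixes the rescaling constant as you state. Two points deserve more care if you intend a self-contained proof. First, the step ``identifying $s=u$'' is doing real work: you are matching the non-isothermal profiles and then arguing that the induced isothermal charts differ by the constant dilation $\nu$; this is correct but should be said explicitly rather than slipped in. Second, your appeal to Gauss--Codazzi to pin down $\beta_{12}$ only determines it up to a global sign, which corresponds to composing with an orientation-reversing isometry of $\HH^3$ (or equivalently to $f_{-\pi/2}$ versus $f_{\pi/2}$); since the statement is ``up to ambient isometry'' this is harmless, but your phrasing suggests Codazzi distinguishes $f_{\pi/2}$ from other $f_\theta$, which it does not---what distinguishes $\theta=\pm\pi/2$ is that $\II$ is purely off-diagonal, and you have already used that.
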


Now we are able to prove the main theorem.

\begin{theorem}\label{thm:main theorem}
For a family of minimal helicoids $\{\Hcal_a\}_{a\geq{}0}$
in the hyperbolic $3$-space $\HH^3$ given by
\eqref{eq:helicoid in hyperboloid model},
there exist a constant $a_c\approx{}2.17966$ such that
the following statements are true:
  \begin{enumerate}
    \item $\Hcal_a$ is a globally stable minimal surface
          if $0\leq{}a\leq{}a_c$, and
    \item $\Hcal_a$ is an unstable minimal surface with index one
          if $a>a_c$.
  \end{enumerate}
\end{theorem}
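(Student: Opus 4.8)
The plan is to exploit the conjugate-surface correspondence from Theorem~\ref{thm:conjugate minimal surfaces} together with the stability-invariance lemma, so that the question about helicoids is converted into a question about catenoids. By the lemma in Section~4, $\Hcal_a$ is globally stable if and only if its conjugate minimal surface is. Since $\Hcal_a$ is simply connected, its conjugate surface is (the universal cover of) a catenoid $\Mscr^2$ with parameter $\atilde$ determined by \eqref{eq:relations for CMS}. For $a=1$ the conjugate is the parabolic catenoid, and for $a\neq 1$ it is either a spherical catenoid ($a>1$, via $a^2=(\atilde+1/2)/(\atilde-1/2)$, i.e.\ large $a$ corresponds to $\atilde$ near $1/2$) or a hyperbolic catenoid ($a<1$). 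By the do Carmo--Dajczer stability result cited at the start of Section~5, the hyperbolic and parabolic catenoids are globally stable; hence every $\Hcal_a$ with $0\le a\le 1$ is globally stable, and the entire problem reduces to analyzing the stability of the spherical catenoids $\Mscr_1^2(\atilde)$ for $\atilde\in(1/2,\infty)$, equivalently (via Lemma~\ref{lem:relation between spherical catenoids}, $2\atilde=\cosh 2\abar$) the catenoids $\CC_{\abar}$ in $\B^3$ for $\abar>0$.

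Next I would set up the Jacobi/stability operator on $\CC_{\abar}$ using the warped-product coordinates $(x,y)$ from \eqref{eq:warped product metric}, restricting to the generating curve $\sigma_{\abar}$ parametrized by $y=t\ge\abar$ and $x=x(t)$. Because $\CC_{\abar}$ is rotationally symmetric, the Jacobi operator $\Lcal=\Delta_{\CC}+(|A|^2+\overline{\Ric}(e_3))=\Delta_{\CC}+(|A|^2-2)$ separates: writing a test function as $\sum_k \phi_k(s)e^{ik\vartheta}$ in terms of arclength $s$ along the generator and the rotation angle $\vartheta$, one gets a family of one-dimensional Schr\"odinger operators $L_k=-\frac{d^2}{ds^2}+q_k(s)$ on the (complete, non-compact) generator, and stability of $\CC_{\abar}$ is equivalent to nonnegativity of the $k=0$ operator $L_0$ (the lowest mode, since the angular term only adds a nonnegative potential $k^2/\cosh^2(\cdot)$-type contribution). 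So I would reduce global stability of $\Hcal_a$ to the nonnegativity on the half-line of a single ODE operator $L_0(\abar)=-d^2/ds^2+q(s;\abar)$, with an explicit potential $q$ computable from \eqref{eq:spherical catenoid-x1(s)}--\eqref{eq:spherical catenoid-x3(s)-x4(s)} or from the $\B^3$ parametrization.

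The crux is then a monotonicity-in-$\abar$ statement for the bottom of the spectrum of $L_0(\abar)$: I would show that there is a threshold $\abar_c$ (equivalently $\atilde_c$, equivalently $a_c\approx 2.17966$) such that $L_0(\abar)\ge 0$ for $\abar\le\abar_c$ and $L_0(\abar)$ has exactly one negative eigenvalue for $\abar>\abar_c$. The "only one negative eigenvalue" part comes from the fact that on the generator the potential well is localized and the indicial/oscillation analysis at $s\to\pm\infty$ shows at most one bound state for $k=0$ while $k\ge 1$ modes stay nonnegative throughout; combined with Theorem~\ref{thm:FCS80} this yields index exactly one for $a>a_c$. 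For the stable range I would produce an explicit positive Jacobi field: differentiating the catenoid family (or the helicoid family) with respect to a geometric parameter, or using a Killing field of $\H^3$ restricted to the surface, gives an explicit solution of $\Lcal\phi=0$; checking that it is positive (no zeros) precisely for $\abar\le\abar_c$ identifies the threshold, and the numerical value $a_c\approx 2.17966$ is pinned down by locating the first zero of this explicit Jacobi field (a transcendental equation solved numerically).

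\textbf{Main obstacle.} The hard part will be the sharp threshold: proving that the bottom eigenvalue of $L_0(\abar)$ is monotone in $\abar$ and crosses zero exactly once, and that no second negative eigenvalue ever appears, so that the index jumps from $0$ to exactly $1$ and stays at $1$. This requires a careful ODE analysis — either an explicit Jacobi field whose zero set can be controlled for all $\abar$, or a Sturm-comparison/oscillation argument for the potential $q(s;\abar)$ — together with the numerical determination of $\abar_c$ hence $a_c$; the reduction steps (conjugation, separation of variables, disposal of the $k\ge 1$ modes, and the $a\le 1$ cases via do Carmo--Dajczer) are comparatively routine.
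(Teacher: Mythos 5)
Your reduction is exactly the paper's: conjugation preserves global stability (the Section~4 lemma), the cases $0\le a\le 1$ are handled by do Carmo--Dajczer's stability of hyperbolic and parabolic catenoids, and for $a>1$ the helicoid is conjugate to the spherical catenoid $\Mscr_1^2(\atilde)$, hence (via Lemma~\ref{lem:relation between spherical catenoids}) to $\CC_{\abar}$ with $a^2=(\atilde+1/2)/(\atilde-1/2)$ and $2\atilde=\cosh(2\abar)$, i.e.\ $a=\coth\abar$. Where you and the paper part ways is at the crux: the paper finishes immediately by citing the sharp stability/index results for spherical catenoids in $\B^3$ \cite[Theorems 1.2 and 3.9]{Wang2012} --- $\CC_{\abar}$ is globally stable for $\abar\ge\abar_c\approx 0.49577$ and unstable with index one for $0<\abar<\abar_c$ --- so that $a_c=\coth(\abar_c)\approx 2.17966$ drops out with no further analysis. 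You instead propose to reprove that threshold from scratch by separating variables in the Jacobi operator on $\CC_{\abar}$, disposing of the $k\ge 1$ modes, and running a monotonicity/oscillation argument on the $k=0$ operator together with an explicit Jacobi field whose first zero pins down $\abar_c$ numerically.

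That plan is plausible (it is essentially how the cited catenoid results are established), but as written it is only a sketch at precisely the step that carries all the content of the theorem: the sharp value of $\abar_c$, the fact that the bottom eigenvalue crosses zero exactly once, the absence of a second negative eigenvalue (index exactly one, not merely $\ge 1$), and the positivity of the $k\ge 1$ modes are all asserted as things you ``would show,'' and you flag them yourself as the main obstacle. So relative to the paper your proposal has a genuine gap unless you either carry out that ODE/spectral analysis in full or simply cite the known catenoid theorems, which is what the paper does. One further shared subtlety worth noting: the conjugate immersion lives on the universal cover of the catenoid, so transferring not just stability but the exact index one from $\CC_{\abar}$ to $\Hcal_a$ for $a>a_c$ uses an index-passes-to-the-cover step that both you and the paper treat as immediate; if you do write up the direct spectral argument, it is cleaner to run it on the simply connected side from the start.
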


\begin{proof}[{\bf Proof of Theorem \ref{thm:main theorem}}]
When $a=0$, $\Hcal_a$ is a hyperbolic plane, so it is
globally stable.

According to Theorem~\ref{thm:conjugate minimal surfaces},
when $0<a<1$, $\Hcal_a$ is conjugate to
the hyperbolic catenoid $\Mscr_{-1}^{2}(\atilde)$, where
$a=\sqrt{(\atilde-1/2)/(\atilde+1/2)}$ by
\eqref{eq:relations for CMS}, and when $a=1$,
$\Hcal_a$ is conjugate to
the parabolic catenoid $\Mscr_{0}^{2}$ in $\HH^3$.
Therefore when $0<a\leq{}1$, the helicoid
$\Hcal_{a}$ is globally stable
by \cite[Theorem (5.5)]{dCD83}.

When $a>1$, $\Hcal_a$ is conjugate to the spherical catenoid
$\Mscr_{1}^{2}(\atilde)$ in $\HH^3$ by
Theorem~\ref{thm:conjugate minimal surfaces},
which is isometric to the spherical catenoid
$\CC_{\abar}$ in $\B^3$, where
$2\atilde=\cosh(2\abar)$
by \eqref{eq:relation between catenoids} and
$a=\sqrt{(\atilde+1/2)/(\atilde-1/2)}$ by
\eqref{eq:relations for CMS}.
Therefore $\Hcal_a$ is conjugate to
the spherical catenoid $\CC_{\abar}$ in $\B^3$, where
\begin{equation*}
   a=\coth(\abar)\ .
\end{equation*}
By \cite[Theorem 3.9]{Wang2012}, $\CC_{\abar}$ is globally
stable if $\abar\geq\abar_{c}\approx{}0.49577$, therefore
$\Hcal_a$ is globally stable when
$1<a\leq{}a_c=\coth(\abar_{c})\approx{}2.17966$.

On the other hand,
if $0<\abar<\abar_{c}$, then the spherical
catenoid $\CC_{\abar}$ is unstable with index $1$ (see
\cite[Theorem 1.2]{Wang2012}), therefore
$\Hcal_a$ is unstable with index $1$ when
$a>a_c$.
\end{proof}

\bibliographystyle{amsplain}
\bibliography{ref_helicoid}
\end{document}